\newtheorem{theo}{Theorem}[section]
\newtheorem*{theo*}{Theorem}
\newtheorem{lemm}[theo]{Lemma}
\newtheorem{exam}[theo]{Example}
\newcommand{\N}{\mathbb{N}}
\title[On Cohen-Ramanujan expansions]{A reciprocity theorem for the Cohen-Ramanujan sums and its application to Cohen-Ramanujan expansions in the second variable}
\begin{document}

\author[K V Namboothiri]{K Vishnu Namboothiri}
\address{Department of Mathematics, Baby John Memorial Government College, Chavara, Sankaramangalam, Kollam, Kerala - 691583, India}
\address{Department of Collegiate Education, Government of Kerala, India}
\email{kvnamboothiri@gmail.com}

\author{Vinod Sivadasan}
 \address{Department of Mathematics, University College, Thiruvananthapuram (Research Centre of the University of Kerala), Kerala - 695034, India}
 \address{Department of Mathematics, College of Engineering Trivandrum, Thiruvananthapuram, Kerala - 695034, India}
 \address{Department of Collegiate Education, Government of Kerala, India}
 \email{wenods@gmail.com}


 \begin{abstract}
For an arithmetical function $f$, its Ramanujan expansion  is a series expansion in the form $f(n)=\sum\limits_{k=1}^{\infty}a(k) c_k(n)$ where  $a(k)$ are complex numbers  and $c_k(n):= \sum\limits_{\substack{m=1\\(m, k)=1}}^{k}e^{\frac{2\pi imn}{k}}$ is the Ramanujan sum. 
Here we prove a reciprocity result on Cohen-Ramanujan sums $c_k^s(n) :=\sum\limits_{\substack{{h=1}\\(h,k^s)_s=1}}^{k^s}e^{\frac{2\pi i n h}{k^s}}$ to change the position of $k$ and $n$ in a twisted function and use it to prove that  for certain arithmetical functions $f$,  Cohen-Ramanujan series expansions in the form  $\sum\limits_{k=1}^{\infty}a(k) c_k^{(s)}(n)$ exist if and only if expansions in the form  $\sum\limits_{k=1}^{\infty}b(k/n) c_n^{(s)}(k)$ exist.
 \end{abstract}
  \keywords{Ramanujan sum, Ramanujan expansions, Cohen-Ramanujan sum, Cohen-Ramanujan expansions, Jordan totient function, Klee's function, Möbius inversion formula, reciprocity theorem}
 \subjclass[2010]{11A25, 11L03}
 
 \maketitle
\section{Introduction}
For an arithmetical function $f$, its \emph{Ramanujan expansion} is a series expansion in the form $f(n)=\sum\limits_{k=1}^{\infty}a(k) c_k(n)$ where $a(k)$ are complex numbers and  $c_k(n)$ is the \emph{Ramanujan sum} defined as
\begin{align*}
c_k(n) :=\sum\limits_{\substack{{m=1}\\(m,k)=1}}^{k}e^{\frac{2 \pi imn}{k}}.
\end{align*} provided that the series on the right of $f(n)$ converges. Srinivasa Ramanujan gave such expansions for some arithmetical functions in \cite{ramanujan1918certain} which were pointwise convergent. Such expansions and necessary and sufficient conditions for such expansions to exist became a topic of discussion in many papers. A. Wintner \cite{wintner1943eratosthenian} gave a condition for the existence of Ramanujan expansions which was later improved by H. Delange in   \cite{delange1976ramanujan}. L. Lucht gave some other conditions in \cite{lucht1995ramanujan} and \cite{lucht2010survey} for such expansions to exist. The paper \cite{murty2013ramanujan} by Ram Murty discusses some new and old results related to these expansions. 

Ramanujan sum was generalized by many later. E.\ Cohen gave a generalization of the Ramanujan sum in \cite{cohen1949extension} defining
\begin{align}\label{cohen-ram-sum}
c_k^s(n) :=\sum\limits_{\substack{{h=1}\\(h,k^s)_s=1}}^{k^s}e^{\frac{2\pi i n h}{k^s}},
\end{align} where $(a,b)_s$ is the generalized gcd of $a$ and $b$. A. Chandran and K. V. Namboothiri derived some conditions for the existence of Ramanujan series like expansions for certain arithmetical functions in terms of the generalized Ramanujan sum (\ref{cohen-ram-sum}) (henceforth called as the  \emph{Cohen-Ramanujan sum})  in \cite{chandran2023ramanujan}. The method of arguments they used were those employed by L. Lucht in \cite{lucht1995ramanujan} and \cite{lucht2010survey}.

 Let us call $k$ to be the first variable and $n$ the second in $c_k(n)$ and $c_k^s(n)$. The Ramanujan expansions and Cohen-Ramanujan expansions mentioned in the papers above are all in the first variable $k$ of the (Cohen-)Ramanujan sum. K. R. Johnson took a different route and proved that for certain arithmetical functions $f$ having Ramanujan expansions (which he called as the $C$-series expansions), there exist expansions in the second variable (\cite[Theorem 7]{johnson1982reciprocity}). Precisely, he proved that 
 
 \emph{A function f has an absolutely convergent C-series representation $f(n) = \sum\limits_{k=1}^{\infty} a(k)c_k(n)$ with $a(k)=0$ if $k$ is not square-free, if and only if  there exists an absolutely convergent $C'$-series
representation $f(n) n^* \mu(\overline{n}) = \sum\limits_{k=1}^{\infty} b(k/n^*)c_n (k)$.}
 
 Here $\overline{n}$ is the largest square-free divisor of $n$, called the \emph{core} of $n$ and $n^*=n/\overline{n}$.
 He proved it with the help of a \emph{reciprocity result}  (\cite[Theorem 6]{johnson1982reciprocity}) which enabled him to interchange the positions of $k$ and $n$ in the Ramanujan sum twisted by the M\"{o}bius function $\mu$. Here we derive an analogous reciprocity result (Theorem \ref{th:mu-c-reciprocity}) involving Cohen-Ramanujan sums and consequently show that for certain arithmetical functions $f$, an expansion
in the form $\sum\limits_{k=1}^{\infty} a(k)c_k^{(s)}(n^s)$ exists if and only if an expansion in the form  $\frac{\mu(\overline{n})}{(n^*)^s}\sum\limits_{k=1}^{\infty}\xi_{n^*}\,(k)b(k/n^*) c_n^{(s)}(k^s)$ exists (Theorem \ref{th:CR-exp-in-other-var}). Here  $\xi_d(k)=d$ if $d|k$ and 0 otherwise.

\section{Notations and basic results}

By $(a, b)_s$, we mean the \emph{generalized GCD} of $a$ and $b$ defined to be the largest $d^s \in \mathbb{N}$ ($d\in \N$)  such that $d^s|a$ and $d^s|b$. Hence $(a,b)_1$ is the usual GCD $(a,b)$. By $J_s$ we mean the \emph{Jordan totient function} defined as $J_s(n) := n^s \prod\limits_{\substack{{p \mid n}}}(1-\frac{1}{p^s})$. Note that $J_1(n)=\varphi(n)$ where $\varphi$ is the Euler totient function. $\omega(k)$ will be used to denote the number of distinct prime divisors of $k$. Unless otherwise stated, $p$ will be used to represent primes.

\emph{The Cohen-Ramanujan sum} (or \emph{generalized Ramanujan sum}) defined by E. Cohen in \cite{cohen1949extension} is the sum given in equation (\ref{cohen-ram-sum}).
By (\cite[Theorem 1]{cohen1949extension}), $c_k^{(s)}(n)$ is multiplicative in $k$. When $k$ is a prime power, the values of the Cohen-Ramanujan sum are as folows (\cite[Theorem 3]{cohen1949extension}).

 \begin{align}
  c_{p^j}^{(s)}(n) = \begin{cases}
                         p^{sj}-p^{s(j-1)} &\quad \text{if }p^{sj}|n\\
                         -p^{s(j-1)} &\quad \text{if }p^{s(j-1)}||n\\
                         0&\quad  \text{if }p^{s(j-1)}\nmid n.
                        \end{cases}\label{eq:crs-primes}
 \end{align}
  Cohen-Ramanujan sum satisfies the following relation with the M\"{o}bius function $\mu$ (\cite[Theorem 3]{cohen1949extension}):
\begin{align}
                    c_k^{(s)}(n)=\sum\limits_{\substack{d|k\\d^s|n}}\mu(k/d)d^s\label{eq:crs-mu}.
                   \end{align}
 Also, with the Jordan totient function, it holds the relation (\cite[Theorem 2]{cohen1959trigonometric})
 \begin{align}
       c_k^{(s)}(n) = \frac{J_s(n)\mu(m)}{J_s(m)}\quad\text{ where } m=\frac{n}{(k,n)}\label{eq:crs-jordan}.
      \end{align}
Sometimes we club the conditions $d|k$ and $d^s|n$ into the single condition $d^s|(n, k^s)_s$.

By the notation $d^m||n$ we mean that $d^m|n$ and $d^{m+1}\nmid n$.
For a prime $p$, the exponent of $p$ in the prime factorization of $k$  will be denoted by $e_p(k)$. Hence $p^{e_p(k)}||k$. Analogously, we write $a := e_p^{(s)}(n)$ if $p^{as}|n$ and $p^{(a+1)s}\nmid n$. In this case, we also use the notation $p^{as}||_s n$.
We use the M\"{o}bius inversion formula and a variant of it many times in this paper. For two arithmetical functions $f$ and $g$, it states that 
\begin{align}
g(k) = (\mu*f)(k) :=\sum\limits_{d|k}\mu(d)f\left(\frac{k}{d}\right) \Longleftrightarrow f(k) = (g*u)(k) = \sum\limits_{d|k} g(d) 
\end{align} where $*$  is the Dirichlet product, and $u\equiv 1$ is the unit function defined by $u(n)=1$ for all $n\in \N$.

A more general inversion formula was given by Hardy and Wright (\cite[Theorem 270]{hardy1979introduction}) which states that 
$g(k)=\sum\limits_{\substack{d=1}}^{\infty}\mu(d)f(kd)$ if and only if $f(k) = \sum\limits_{\substack{d=1}}^{\infty}g(kd)$.

\section{Main Results and proofs}
We have already stated in the previous section that  Cohen-Ramanujan sum $c_k^{(s)}(n)$ is multiplicative in $k$. However, it is not multiplicative in $n$. It was proved by Donovan and Rearick in  \cite{donovan1966ramanujan} that though $c_k(n)$ is not multiplicative in $n$, the twisted function $\mu(k)c_k(n)$ is multiplicative in $n$ when $k$ is sqaure-free. We will prove an analogous result here for the Cohen-Ramanujan sum.
\begin{theo}\label{th:mu-ck-mult}
 For positive integers $m,n$ with $(m,n)=1$, and a square-free positive integer $k$,  we have $\mu(k)c_k^{(s)}(mn) = \mu(k)c_k^{(s)}(m) \times \mu(k)c_k^{(s)}(n)$.
\end{theo}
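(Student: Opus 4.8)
The plan is to reduce the claim to the case where $k$ is a single prime, and then dispose of that case by a direct computation with the prime-power values recorded in (\ref{eq:crs-primes}). Since $k$ is square-free, I would write $k = p_1 p_2 \cdots p_r$ with the $p_i$ distinct primes. Because $\mu$ is multiplicative and $c_k^{(s)}(n)$ is multiplicative in $k$ (as noted after (\ref{cohen-ram-sum})), the twisted function factors as $\mu(k) c_k^{(s)}(n) = \prod_{i=1}^{r} \mu(p_i) c_{p_i}^{(s)}(n)$. Hence, writing $g_p(n) := \mu(p)\, c_p^{(s)}(n)$, it suffices to prove $g_p(mn) = g_p(m)\, g_p(n)$ for every prime $p$ and every coprime pair $m,n$: multiplying these prime identities over $i = 1, \dots, r$ and then regrouping the $m$-factors and $n$-factors yields $\mu(k) c_k^{(s)}(mn) = \bigl[\mu(k) c_k^{(s)}(m)\bigr]\bigl[\mu(k) c_k^{(s)}(n)\bigr]$, which is the assertion.

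For the prime case I would specialize (\ref{eq:crs-primes}) to $j=1$. The third alternative there is vacuous, since $p^{0}\mid n$ always, so $c_p^{(s)}(n) = p^s - 1$ when $p^s \mid n$ and $c_p^{(s)}(n) = -1$ otherwise. As $\mu(p) = -1$, the target identity $g_p(mn) = g_p(m)\, g_p(n)$ is equivalent to $-c_p^{(s)}(mn) = c_p^{(s)}(m)\, c_p^{(s)}(n)$, which I would settle by a short case analysis. The governing observation is that $(m,n)=1$ forces $p$ to divide at most one of $m,n$, so that $e_p(mn)$ equals the larger of $e_p(m)$ and $e_p(n)$; consequently $p^s \mid mn$ holds if and only if $p^s \mid m$ or $p^s \mid n$, while the event ``$p^s \mid m$ and $p^s \mid n$'' is impossible. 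This leaves two genuinely distinct situations: (i) exactly one of $m,n$ is divisible by $p^s$, where both sides evaluate to $-(p^s-1)$; and (ii) neither is divisible by $p^s$, where $p^s \nmid mn$ as well and both sides evaluate to $1$. Verifying these two lines finishes the prime case.

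The delicate point is the correct reading of the middle branch of (\ref{eq:crs-primes}) together with confirming that coprimality genuinely excludes simultaneous divisibility by $p^s$; once that is pinned down the two-case check is immediate, and no subtlety about the exponent $s$ actually intervenes beyond ``$p^s \mid m \Rightarrow p \mid m$''. The only remaining care is the bookkeeping in the reduction, namely that $\prod_i g_{p_i}(m)\, g_{p_i}(n)$ separates as $\bigl(\prod_i g_{p_i}(m)\bigr)\bigl(\prod_i g_{p_i}(n)\bigr)$, which is precisely $\bigl[\mu(k) c_k^{(s)}(m)\bigr]\bigl[\mu(k) c_k^{(s)}(n)\bigr]$ by the factorization established at the outset.
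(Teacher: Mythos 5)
Your proof is correct, but it takes a genuinely different route from the paper's. The paper splits $k$ into just two blocks, $k=k_1k_2$ with $(k_1,n)=(k_2,m)=1$, applies multiplicativity in $k$ once, and then works globally with the M\"{o}bius expansion (\ref{eq:crs-mu}): coprimality lets it replace the divisor conditions $d\mid k_1$, $d^s\mid mn$ by $d\mid k$, $d^s\mid m$ (and symmetrically for $k_2$ and $n$), after which the factors $\mu(k_1/d)\mu(k_2)=\mu(k/d)$ reassemble the two sums into $c_k^{(s)}(m)\,c_k^{(s)}(n)$. You instead push multiplicativity all the way down to the prime factorization $k=p_1\cdots p_r$ and verify the single local identity $g_p(mn)=g_p(m)g_p(n)$ by a two-case check against (\ref{eq:crs-primes}); coprimality enters only through $e_p(mn)=\max\bigl(e_p(m),e_p(n)\bigr)$, so that $p^s\mid mn$ holds iff $p^s$ divides exactly one of $m,n$, and your two cases (both sides $-(p^s-1)$, respectively both sides $1$) exhaust all possibilities, with the regrouping of the products being immediate. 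The one delicate point you rightly flag --- reading the middle branch of (\ref{eq:crs-primes}) at $j=1$ as ``$c_p^{(s)}(n)=-1$ whenever $p^s\nmid n$'' --- is the intended interpretation (the $\|$ there is the paper's $\|_s$ in its own notation), and if you want to make your argument self-contained you can bypass (\ref{eq:crs-primes}) altogether by evaluating $c_p^{(s)}(n)$ directly from (\ref{eq:crs-mu}): for $k=p$ the sum always contains the term $d=1$ with $\mu(p)\cdot 1=-1$, and contains $d=p$ with value $p^s$ exactly when $p^s\mid n$, giving $p^s-1$ or $-1$ as claimed. In exchange for this case analysis and the dependence on the prime-power table, your argument is more elementary and makes the role of coprimality transparent prime by prime; the paper's version avoids any case split but requires the somewhat fussier bookkeeping that the three divisor sets coincide and that the M\"{o}bius factors regroup, which is where square-freeness of $k$ does its work there (in yours it is used only to write $\mu(k)c_k^{(s)}(\cdot)=\prod_i \mu(p_i)c_{p_i}^{(s)}(\cdot)$, noting also that for non-square-free $k$ the asserted identity is trivially $0=0$).
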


\begin{proof}
 Write $k = k_1k_2$ with $(k_1,n)=(k_2,m)=1$. Then
 \begin{align*}
  \mu(k_1k_2) c_{k_1k_2}^{(s)}(mn)=\mu(k_1) c_{k_1}^{(s)}(mn) \times \mu(k_2) c_{k_2}^{(s)}(mn).
 \end{align*}
Now use identity (\ref{eq:crs-mu}) to proceed further to get the above to be equal to
\begin{align*}
 \mu(k_1)\sum\limits_{\substack{d|k_1\\d^s|mn}}d^s\mu(k_1/d)\times \mu(k_2)\sum\limits_{\substack{d|k_2\\d^s|mn}}d^s\mu(k_2/d).
\end{align*}
By our assumption on $k_1, k_2, m$, and $n$, we can see that $\{d:d|k_1\text{ and }d^s|mn\}=\{d:d|k_1\text{ and }d^s|m\} = \{d:d|k\text{ and }d^s|m\}$. Similarly,  $\{d:d|k_2\text{ and }d^s|mn\}=\{d:d|k\text{ and }d^s|n\}$. So

\begin{align*}
  \mu(k_1k_2) c_{k_1k_2}^{(s)}(mn) &= \mu(k_2)\sum\limits_{\substack{d|k\\d^s|m}}d^s\mu(k_1/d)\times \mu(k_1)\sum\limits_{\substack{d|k\\d^s|n}}d^s\mu(k_2/d)\\
  &= \sum\limits_{\substack{d|k\\d^s|m}}d^s\mu(k/d) \times  \sum\limits_{\substack{d|k\\d^s|n}}d^s\mu(k/d)\\
  &= c_k^{(s)}(m) c_k^{(s)}(n)\\
  &= \mu(k) c_k^{(s)}(m) \times \mu(k) c_k^{(s)}(n).
  \end{align*}

\end{proof}

K. R. Johnson  used the the multiplicative property of $\mu(k)c_k(.)$ to show that $\mu(k)c_k(n)=\sum\limits_{d|(k,n)} d\mu(d)$ for any square-free $k$ (\cite[Lemma 1]{johnson1982reciprocity}). He did not give  details for proving it, but just stated that the multiplicative nature of both sides of this equality can be used to prove this quickly. We will derive an analogous result giving a detailed proof.

\begin{theo}\label{th:mu-ck-sum}
For  square-free positive integer $k$, we have
\begin{align*}
 \mu(k)c^{(s)}_k(n)=\sum\limits_{d^s|(k^s,n)_s} d^s\mu(d)
\end{align*}
\end{theo}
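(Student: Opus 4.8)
The plan is to sidestep the multiplicativity argument that Johnson only alluded to, and instead read the identity straight off the Möbius-type formula (\ref{eq:crs-mu}) for the Cohen-Ramanujan sum. Starting from $c_k^{(s)}(n)=\sum_{\substack{d\mid k\\ d^s\mid n}}\mu(k/d)\,d^s$, I would multiply both sides by $\mu(k)$ and pull the factor inside the sum, so that the whole problem collapses to understanding the single product $\mu(k)\mu(k/d)$ for each divisor $d\mid k$ appearing in the sum.

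The key step is the elementary observation that, since $k$ is square-free and $d\mid k$, the complementary divisors $d$ and $k/d$ share no prime factor and are therefore coprime; multiplicativity of $\mu$ then gives $\mu(k)=\mu(d)\,\mu(k/d)$. As $k/d$ is again square-free we have $\mu(k/d)\in\{-1,1\}$, hence $\mu(k/d)^2=1$, and so $\mu(k)\mu(k/d)=\mu(d)\,\mu(k/d)^2=\mu(d)$. Substituting this term by term turns $\mu(k)\sum_{\substack{d\mid k\\ d^s\mid n}}\mu(k/d)\,d^s$ into $\sum_{\substack{d\mid k\\ d^s\mid n}}\mu(d)\,d^s$, and recognizing the two conditions $d\mid k$ and $d^s\mid n$ as the clubbed condition $d^s\mid (k^s,n)_s$ yields precisely $\sum_{d^s\mid (k^s,n)_s} d^s\mu(d)$. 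In this route there is essentially no obstacle: the only nontrivial ingredient is the sign identity $\mu(k)\mu(k/d)=\mu(d)$ for $d\mid k$ with $k$ square-free, and the rest is a one-line substitution into (\ref{eq:crs-mu}).

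For completeness I would also record the alternative route that Johnson had in mind, which is the reason Theorem \ref{th:mu-ck-mult} was proved first. Fixing the square-free $k$, the left side $\mu(k)c_k^{(s)}(n)$ is multiplicative in $n$ by Theorem \ref{th:mu-ck-mult}, while the right side $\sum_{\substack{d\mid k\\ d^s\mid n}}d^s\mu(d)$ is multiplicative in $n$ as well: for coprime $n_1,n_2$ each $d\mid k$ with $d^s\mid n_1 n_2$ factors uniquely as $d=d_1d_2$ with $d_i^s\mid n_i$ and $d_1,d_2$ coprime divisors of $k$, which splits the sum into a product. It then suffices to compare the two sides at $n=1$ and at prime powers $n=p^a$; using the prime-power values (\ref{eq:crs-primes}), the multiplicativity of $c_k^{(s)}$ in $k$, and $\mu(k)=(-1)^{\omega(k)}$, one checks that both sides equal $1$ when $p\nmid k$ or $a<s$, and both equal $1-p^s$ when $p\mid k$ and $a\ge s$. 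The delicate part of this alternative is the bookkeeping at prime powers, where the factor $-1$ contributed by each prime $q\mid k$ with $q\ne p$ must be seen to cancel against the sign of $\mu(k)$, leaving only the contribution of the prime $p$ itself; the direct argument above avoids this entirely, so I would present it as the main proof.
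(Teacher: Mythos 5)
Your direct argument is correct, and it takes a genuinely different and more elementary route than the paper. The paper's own proof is precisely your ``alternative route'': it invokes Theorem \ref{th:mu-ck-mult} to get multiplicativity of $\mu(k)c_k^{(s)}(n)$ in $n$, proves multiplicativity of the divisor sum $\sum_{d^s\mid(k^s,n)_s}d^s\mu(d)$ in $n$ via the coprime factorization $d=d_1d_2$ you describe, and then checks the identity at $n=p^a$ using the prime-power values (\ref{eq:crs-primes}), with exactly the case split ($p\mid k$ and $a\ge s$ giving $1-p^s$, otherwise $1$) and the sign bookkeeping $\mu(k/p)=(-1)^{\omega(k/p)}$ that you flag as the delicate part; your sketch of that route matches the paper's argument faithfully. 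Your main proof instead reads the identity straight off Cohen's formula (\ref{eq:crs-mu}): for square-free $k$ and $d\mid k$, the complementary divisors $d$ and $k/d$ are coprime, so $\mu(k)=\mu(d)\mu(k/d)$ and hence $\mu(k)\mu(k/d)=\mu(d)\mu(k/d)^2=\mu(d)$, which turns $\mu(k)\sum_{d\mid k,\ d^s\mid n}\mu(k/d)d^s$ into $\sum_{d\mid k,\ d^s\mid n}\mu(d)d^s$ term by term; rewriting the pair of conditions $d\mid k$, $d^s\mid n$ as $d^s\mid(k^s,n)_s$ is exactly the clubbing convention the paper sets up in Section 2, so this step is legitimate. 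What your route buys is brevity and logical economy: it needs no case analysis, works for all $n$ at once, and makes Theorem \ref{th:mu-ck-mult} unnecessary for this result (and, since Theorem \ref{th:mu-ck-mult} is not used elsewhere in the paper, it would become a standalone result of independent interest, analogous to Donovan and Rearick's observation). What the paper's route buys is a worked demonstration of the multiplicativity technique in the spirit of Johnson's original treatment, which the authors are deliberately paralleling. Your choice to present the direct substitution as the main proof is sound.
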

\begin{proof}
 From Theorem \ref{th:mu-ck-mult}, $\mu(k)c^{(s)}_k(n)$ is multiplicative in $n$. So if we show that $\sum\limits_{d^s|(k^s,n)_n} d^s\mu(d)$ is also multiplicative in $n$, we can complete the proof by verifying the identity in the statement for prime powers.

 Let $m,n$ be positive integers such that $(m,n)=1$. Now $d^s|(k^s, mn)_s$ implies that $d^s|k^s$ and $d^s|mn$. Since $(m,n)=1$,  it must be true that $d=d_1d_2$ with $(d_1,d_2)=1$ and $(d_1,n)=1=(d_2,m)$. So as we saw previously, $d^s|(k^s, mn)_s$ if and only if $d_1^s|k^s, d_1^s|m, d_2^s|k^s$, and $d_2^s|n$.  Hence \begin{align*}
\sum\limits_{d^s|(k^s,n)_n} d^s\mu(d)=\sum\limits_{d_1^s|(k^s,m)_s} d_1^s\mu(d_1)\times \sum\limits_{d_2^s|(k^s,n)_s} d_2^s\mu(d_2).
\end{align*}

Now we will show that for every prime $p$ and  positive integer $a$, we have $\mu(k)c^{(s)}_k(p^a)=\sum\limits_{d^s|(k^s,p^a)_s} d^s\mu(d)$. If $p|k$, then since $k$ is square-free, $p^s||_s k^s$. In addition, if $a\geq s$, then $d^s|(k^s,p^a)_s\Rightarrow d^s = 1$ or $d^s=p^s$. Hence if $p|k$ and $a\geq s$, then $\sum\limits_{d^s|(k^s,p^a)_s} d^s\mu(d)=\mu(1)+p^s\mu(p) = 1-p^s.$ Now $\mu(k)c^{(s)}_k(p^a) = \mu(p)c^{(s)}_p(p^a) \times \mu(k/p)c^{(s)}_{k/p}(p^a)$. From equation  \ref{eq:crs-primes}, $c_p^{(s)}(p^a) = p^s-1$ and $c^{(s)}_{k/p}(p^a)=\prod\limits_{\substack{i = 1\\p_i|k,\,p_i\neq p}}^r  c^{(s)}_{p_i}(p^a) = \prod\limits_{\substack{i = 1\\p_i\neq p}}^r (-p^{s(1-1)})=(-1)^{\omega(k/p)}$. Also, since $k$ is square-free, $\mu(k/p) = (-1)^{\omega(k/p)}$. Therefore $\mu(k)c^{(s)}_k(p^a)=(-1)(p^s -1)=1-p^s$.

Now if $p\nmid k$ or $a<s$, then $d^s|(k^s,p^a)_s\Rightarrow d^s=1$. So $\sum\limits_{d^s|(k^s,p^a)_s} d^s\mu(d)=1$. Now $\mu(k)c^{(s)}_k(p^a) = \prod\limits_{p_i|k}\mu(p_i) c^{(s)}_{p_i}(p^a) = \prod\limits_{p_i|k} (-1)(-1)=1$. So in all the cases, we get $\mu(k)c^{(s)}_k(p^a)=\sum\limits_{d^s|(k^s,p^a)_s} d^s\mu(d)$. This completes the proof of the theorem.
\end{proof}

Recall that for a positive integer $k$,  the \emph{core} of $k$ is defined to be its largest square-free divisor denoted by $\overline{k}$ and $k^* = k/\overline{k}$. Hardy proved in \cite{hardy1921note} that $c_k(n)=0$ if $k^*\nmid n$. This is equivalent to the statement that $c_{p^r}(n)=0$ unless $p^{r-1}|n$. An analogous statement in terms of the Cohen-Ramanujan sum is appearing in equation  (\ref{eq:crs-primes}). We use is to reformulate Hardy's result as follows and skip the proof of it as it follows easily from equation  (\ref{eq:crs-primes}).
\begin{lemm}
$c_k^{(s)}(n)=0$ if $(k^*)^s\nmid n$.
\end{lemm}

In \cite[Lemma 2]{johnson1982reciprocity}, K. R. Johnson stated that $c_k(nk^*) = k^*c_{\overline{k}}(n)$. He did not provide any details on how to arrive at this result, but just mentioned that one may use Hardy's result  (which is the above lemma with $s=1$) to verify it. We give a generalization of this result in terms of the Cohen-Ramanujan sum and prove it.
\begin{lemm}\label{th:ck-ck-core}
 $ c_k^{(s)}(n(k^*)^s)=(k^*)^s c_{\overline{k}}^{(s)}(n).$
 \end{lemm}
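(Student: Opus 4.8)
The plan is to exploit the multiplicativity of $c_k^{(s)}$ in its first variable to reduce the claimed identity to a statement about prime powers, and then to settle the prime-power case directly from the explicit formula (\ref{eq:crs-primes}). The single observation that makes this work is that, reading off (\ref{eq:crs-primes}), the value $c_{p^a}^{(s)}(M)$ depends on $M$ only through the exponent of $p$ in $M$. Indeed, in the divisor sum (\ref{eq:crs-mu}) for $k=p^a$ the factor $\mu(p^a/d)$ is nonzero only for $d=p^a$ and $d=p^{a-1}$, so only the conditions $p^{sa}\mid M$ and $p^{s(a-1)}\mid M$ ever enter. Consequently any part of the argument coprime to $p$ may be discarded without changing the value.

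Concretely, I would write $k=\prod_{i=1}^r p_i^{a_i}$, so that $\overline{k}=\prod_{i=1}^r p_i$ and $k^*=\prod_{i=1}^r p_i^{a_i-1}$. By multiplicativity in the first variable, $c_k^{(s)}\bigl(n(k^*)^s\bigr)=\prod_{i=1}^r c_{p_i^{a_i}}^{(s)}\bigl(n(k^*)^s\bigr)$ and $c_{\overline{k}}^{(s)}(n)=\prod_{i=1}^r c_{p_i}^{(s)}(n)$, while $(k^*)^s=\prod_{i=1}^r p_i^{s(a_i-1)}$. It therefore suffices to prove, for each $i$, the single-prime identity $c_{p_i^{a_i}}^{(s)}\bigl(n(k^*)^s\bigr)=p_i^{s(a_i-1)}\,c_{p_i}^{(s)}(n)$, since multiplying these over $i$ reassembles exactly the two sides of the lemma.

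To prove this single-prime identity I would fix $p=p_i$, $a=a_i$, and write $(k^*)^s=p^{s(a-1)}Q$ with $p\nmid Q$. By the observation above the factor $Q$ is irrelevant, so $c_{p^a}^{(s)}\bigl(n(k^*)^s\bigr)=c_{p^a}^{(s)}\bigl(np^{s(a-1)}\bigr)$, and the exponent of $p$ in $np^{s(a-1)}$ is $s(a-1)+e_p(n)$. Plugging $j=a$ into (\ref{eq:crs-primes}) then splits into two cases according to whether $p^s\mid n$ or not: if $p^s\mid n$ one lands in the top case and obtains $p^{sa}-p^{s(a-1)}$, and if $p^s\nmid n$ one lands in the middle case and obtains $-p^{s(a-1)}$. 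Comparing with $p^{s(a-1)}c_p^{(s)}(n)$, where (\ref{eq:crs-primes}) with $j=1$ gives $c_p^{(s)}(n)=p^s-1$ when $p^s\mid n$ and $c_p^{(s)}(n)=-1$ otherwise, one checks the two expressions agree case by case. The case $a=1$ is trivial, since then $p^{s(a-1)}=1$ and both sides are literally $c_p^{(s)}(n)$.

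The only real subtlety, and the step I would treat most carefully, is the decoupling in the previous paragraph: although $(k^*)^s$ mixes contributions from all primes dividing $k$, each prime-power factor $c_{p_i^{a_i}}^{(s)}$ sees only the $p_i$-part of the argument, so the cross terms $Q$ drop out. Getting this bookkeeping right is exactly what the valuation-only dependence of (\ref{eq:crs-primes}) guarantees; once that is in hand the computation is a routine case check. A slightly more computational alternative would be to expand both sides using (\ref{eq:crs-mu}) and match the surviving divisor sums directly, but the multiplicative reduction seems cleaner and less error-prone.
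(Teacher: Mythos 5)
Your proposal is correct and takes essentially the same route as the paper: both arguments reduce the identity via multiplicativity of $c_k^{(s)}$ in the first variable to prime powers and then settle the two cases $p^s\mid n$ and $p^s\nmid n$ from the explicit formula (\ref{eq:crs-primes}). The only cosmetic difference is that you verify a per-prime identity $c_{p^a}^{(s)}(n(k^*)^s)=p^{s(a-1)}c_p^{(s)}(n)$ and reassemble, making explicit (via the valuation-only dependence from (\ref{eq:crs-mu})) why each factor sees only the $p$-part of the argument, whereas the paper evaluates both sides as complete products over the prime powers exactly dividing $k$ and compares them, leaving that decoupling step implicit.
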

 \begin{proof}
\begin{align*}
  c_k^{(s)}(n(k^*)^s) &=\prod\limits_{p^r||k} c_{p^r}^{(s)}(n(k^*)^s)\\
  &=\prod\limits_{\substack{p^r||k\\p^s|n}} c_{p^r}^{(s)}(n(k^*)^s)\times \prod\limits_{\substack{p^r||k\\p^s\nmid n}} c_{p^r}^{(s)}(n(k^*)^s)\\
  &= \prod\limits_{\substack{p^r||k\\p^s| n}} (p^{rs}-p^{(r-1)s}) \prod\limits_{\substack{p^r||k\\p^s\nmid n}} (-p^{(r-1)s}).
\end{align*}
On the other hand,
\begin{align*}
 (k^*)^s c_{\overline{k}}^{(s)}(n) &= \prod \limits_{\substack{p^r||k}}p^{(r-1)s} \prod\limits_{\substack{p|k\\p^s |  n}} c_{p}^{(s)}(n) \prod\limits_{\substack{p|k\\p^s \nmid  n}} c_{p}^{(s)}(n)\\
  &=\prod \limits_{\substack{p^r||k\\p^s |n}}p^{(r-1)s} \prod\limits_{\substack{p|k\\p^s |  n}} (p^s-1)
  \prod \limits_{\substack{p^r||k\\p^s\nmid n}}p^{(r-1)s} \prod\limits_{\substack{p|k\\p^s \nmid  n}} (-1)\\
  &=c_k^{(s)}(n(k^*)^s).
\end{align*}

 \end{proof}

 Now we give a sufficient condition for the existence of Cohen-Ramanujan expansions of certain arithmetical functions in the second variable. We call the Cohen-Ramanujan expansion in the first variable as the CR-series expansion and the expansion in the second variable as the CR'-series expansion. This result is analogous to \cite[Theorem 4]{johnson1982reciprocity}.

\begin{theo}
 Let $f$ be an arithmetical function. Let $a(k)$ be a function such that $a(k)=0$ if $k\neq (\overline{k})^s$. For other values of $k$, $a(k)$ is defined recursively using the relation
 \begin{align}\label{eq:fk-defn}
  f(k) = \sum\limits_{d|k} d^s\mu(d)a(d^s).
 \end{align}
Suppose \begin{align}
           \sum\limits_{k=1}^{\infty} \sum\limits_{n=1}^{\infty} |a(n^sk^s)| <\infty.
          \end{align}
Define
\begin{align}\label{eq:bk-defn}
b(k^s)=\sum\limits_{n=1}^{\infty} \mu(n)a(n^sk^s),
\end{align}
with $b$ defined to be 0 on positive integers that are not $s^{\text{th}}$ powers. Then
\begin{align}\label{eq:muk-fk}
 \mu(k)f(k) = \sum\limits_{n=1}^{\infty} b(n^s)c_k^{(s)}(n^s)
\end{align}
for all square-free $k$. Also, the right-hand side series in equation (\ref{eq:muk-fk}) converges absolutely.
\end{theo}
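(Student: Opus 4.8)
The plan is to evaluate the right-hand side of $(\ref{eq:muk-fk})$ directly and show it collapses to $\mu(k)f(k)$. Since $k$ is square-free we have $\mu(k)^2=1$, so I would first rewrite $c_k^{(s)}(n^s)=\mu(k)\bigl[\mu(k)c_k^{(s)}(n^s)\bigr]$ and apply Theorem \ref{th:mu-ck-sum} (with $n$ replaced by $n^s$) to the bracketed factor, giving $c_k^{(s)}(n^s)=\mu(k)\sum_{d^s\mid (k^s,n^s)_s}d^s\mu(d)$. A short preliminary observation is that $e^s\mid k^s$ holds if and only if $e\mid k$, so $(k^s,n^s)_s=(k,n)^s$ and the divisibility condition $d^s\mid(k^s,n^s)_s$ is simply $d\mid(k,n)$. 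With this, the series in $(\ref{eq:muk-fk})$ becomes $\mu(k)\sum_{n=1}^{\infty}b(n^s)\sum_{d\mid(k,n)}d^s\mu(d)$.

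The core of the argument is then to interchange the two summations and apply the Hardy--Wright inversion formula once. Because $d\mid(k,n)$ means $d\mid k$ and $d\mid n$, I would bring the $d$-sum outside and substitute $n=dm$, turning the expression into $\mu(k)\sum_{d\mid k}d^s\mu(d)\sum_{m=1}^{\infty}b(d^sm^s)$. To evaluate the inner sum I would pass to the companion functions $F(t):=a(t^s)$ and $G(t):=b(t^s)$; the definition $(\ref{eq:bk-defn})$ says precisely $G(t)=\sum_{n=1}^{\infty}\mu(n)F(nt)$, so the inversion formula of Hardy and Wright yields $F(t)=\sum_{m=1}^{\infty}G(mt)$, that is, $a(d^s)=\sum_{m=1}^{\infty}b(d^sm^s)$. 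Substituting this and invoking the defining recursion $(\ref{eq:fk-defn})$ reduces everything to $\mu(k)\sum_{d\mid k}d^s\mu(d)a(d^s)=\mu(k)f(k)$, which is the claim.

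I expect the main obstacle to be the convergence bookkeeping rather than any single identity. The interchange of summation and the use of the inversion formula both require absolute convergence, and this is exactly where the hypothesis $\sum_{k}\sum_{n}|a(n^sk^s)|<\infty$ is used. I would first note that $|b(n^s)|\le\sum_{m=1}^{\infty}|a(m^sn^s)|$, so that $\sum_{n=1}^{\infty}|b(n^s)|\le\sum_{n}\sum_{m}|a(m^sn^s)|<\infty$; moreover, for a fixed square-free $k$ only the finitely many divisors $d\mid k$ occur, and $c_k^{(s)}(n^s)$ is bounded uniformly in $n$ (for instance through $(\ref{eq:crs-jordan})$, or from the prime-power values $(\ref{eq:crs-primes})$ together with multiplicativity in $k$). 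These two facts make the double series $\sum_{n}\sum_{d\mid(k,n)}|b(n^s)d^s\mu(d)|$ finite, which legitimizes the rearrangement and, applied to $\sum_{n}b(n^s)c_k^{(s)}(n^s)$, also establishes the asserted absolute convergence. The one delicate point is keeping the $s$-th-power indexing consistent when moving between $a,b$ and their companions $F,G$, since the Hardy--Wright formula is stated for functions of a single integer argument.
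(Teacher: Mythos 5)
Your proposal is correct and takes essentially the same approach as the paper: both hinge on Theorem \ref{th:mu-ck-sum} to turn $\mu(k)c_k^{(s)}(n^s)$ into the divisor sum $\sum_{d^s\mid(k^s,n^s)_s}d^s\mu(d)$ and on the Hardy--Wright inversion formula to get $a(d^s)=\sum_{m=1}^{\infty}b(d^sm^s)$, the only difference being that you collapse the series down to $\mu(k)f(k)$ while the paper expands $f(k)$ up to the series. Your explicit convergence bookkeeping (that $c_k^{(s)}(n^s)$ is bounded uniformly in $n$ for fixed $k$ and that $\sum_n|b(n^s)|\leq\sum_n\sum_m|a(m^sn^s)|<\infty$) matches the estimate the paper uses for absolute convergence, merely spelled out in more detail.
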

\begin{proof}
By the inversion formula we stated previously, we have $b(k^s)=\sum\limits_{n=1}^{\infty} \mu(n)a(n^sk^s)$ if and only if $a(k^s) = \sum\limits_{n=1}^{\infty}b(n^sk^s)$.  So from equation (\ref{eq:fk-defn}), we get
\begin{align*}
f(k) &= \sum\limits_{d|k} d^s\mu(d)a(d^s)\\
 &= \sum\limits_{d|k} d^s \mu(d) \sum\limits_{n=1}^{\infty}b(n^s d^s)\\
 &= \sum\limits_{d|k} d^s \mu(d) \sum\limits_{\substack{n=1\\d|n}}^{\infty}b(n^s)\\
 &= \sum\limits_{\substack{n=1}}^{\infty}b(n^s) \sum\limits_{d^s|(k^s,n^s)_s} d^s \mu(d)
\end{align*}
which is equal to $ \sum\limits_{\substack{n=1}}^{\infty}b(n^s) c_k^{(s)}(n^s) \mu(k)$ by Theorem \ref{th:mu-ck-sum}.

Now $\sum\limits_{\substack{n=1}}^{\infty}|b(n^s)|\, | c_k^{(s)}(n^s)| \leq k \sum\limits_{\substack{n=1}}^{\infty} \sum\limits_{\substack{m=1}}^{\infty}|a(n^sm^s)|$ implying the absolute convergence.

\end{proof}

Now we  give a converse of the above theorem.

\begin{theo}
 Suppose that $f$ has a series representation as in (\ref{eq:muk-fk}) with $\sum\limits_{\substack{k=1}}^{\infty}b(k^s)<\infty$ and $b(n)=0$ if $n\neq (\overline{n})^s$. Then $b(k^s)$ is given by (\ref{eq:bk-defn}).
\end{theo}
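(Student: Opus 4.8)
The plan is to run the argument of the forward theorem in reverse: first peel off the Cohen--Ramanujan sum via Theorem \ref{th:mu-ck-sum}, and then recover $b$ through the Hardy--Wright inversion formula recalled in Section~2. Fix a square-free $k$. Multiplying the given representation (\ref{eq:muk-fk}) by $\mu(k)$ and using $\mu(k)^2=1$, I would write
\begin{align*}
f(k)=\sum\limits_{n=1}^{\infty}b(n^s)\,\mu(k)c_k^{(s)}(n^s).
\end{align*}
Theorem \ref{th:mu-ck-sum}, applied with argument $n^s$, gives $\mu(k)c_k^{(s)}(n^s)=\sum\limits_{d^s\mid(k^s,n^s)_s}d^s\mu(d)$, and since $d^s\mid(k^s,n^s)_s$ amounts to $d\mid k$ together with $d\mid n$, the right-hand side becomes a double sum over $n\ge 1$ and $d\mid(k,n)$.

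The first genuine step is to justify interchanging the two summations. For fixed $k$ the sum defining $c_k^{(s)}$ has at most $k^s$ unit-modulus terms, so $|c_k^{(s)}(n^s)|\le k^s$ uniformly in $n$; the hypothesis that $\sum_k b(k^s)$ converges absolutely then makes the double series absolutely convergent and the interchange legitimate. Collecting the inner sum I would obtain
\begin{align*}
f(k)=\sum\limits_{d\mid k}d^s\mu(d)\sum\limits_{\substack{n\ge 1\\ d\mid n}}b(n^s)=\sum\limits_{d\mid k}d^s\mu(d)A(d^s),\qquad A(d^s):=\sum\limits_{m=1}^{\infty}b(d^sm^s).
\end{align*}
This is exactly relation (\ref{eq:fk-defn}) with $A$ in the role of $a$. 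Since $\mu(d)$ annihilates non-square-free $d$, only square-free divisors contribute, so for square-free $k$ Möbius inversion (equivalently, induction on $\omega(k)$) recovers $A(d^s)$ uniquely from $f$; the same inversion recovers $a(d^s)$, whence $A(d^s)=a(d^s)$ for every square-free $d$. For non-square-free $d$ both sides vanish: $a(d^s)=0$ by the convention $a(k)=0$ for $k\neq(\overline{k})^s$, while each term $b((dm)^s)$ in $A(d^s)$ vanishes because $dm$ is never square-free and $b$ is supported on $s$-th powers of square-free integers. Hence $a(k^s)=\sum_{m}b(k^sm^s)$ for all $k$.

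Finally I would feed this into the Hardy--Wright inversion formula, applied to $F(k):=a(k^s)$ and $G(k):=b(k^s)$: the identity $F(k)=\sum_{d}G(kd)$ just established is equivalent to $G(k)=\sum_{d}\mu(d)F(kd)$, that is, $b(k^s)=\sum_{d}\mu(d)a(d^sk^s)$, which is precisely (\ref{eq:bk-defn}). The step I expect to be most delicate is the identification $A=a$: one must verify carefully that the recursively defined $a$, nonzero only on $s$-th powers of square-free integers, agrees with the $A$ extracted from the representation on \emph{all} arguments (including the vanishing on non-square-free ones), since the Hardy--Wright step requires $a(k^s)=\sum_m b(k^sm^s)$ for every $k$, not merely for square-free $k$.
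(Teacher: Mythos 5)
Your proof is correct and follows the paper's own argument essentially step for step: multiply (\ref{eq:muk-fk}) by $\mu(k)$, expand $\mu(k)c_k^{(s)}(n^s)$ via Theorem \ref{th:mu-ck-sum}, interchange summations to obtain $f(k)=\sum_{d\mid k}d^s\mu(d)\sum_{m}b(d^sm^s)$, identify the inner sum with $a(d^s)$, and invert using the Hardy--Wright formula. The only difference is that you make explicit two points the paper leaves tacit --- the absolute-convergence justification for the interchange and the verification that $A(d^s)=a(d^s)$ also on non-square-free $d$ (both sides vanishing there) so that the inversion applies at every argument --- which is a welcome tightening rather than a new route.
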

\begin{proof}
 We will more or less work backward the proof of the previous theorem. For square-free $k$, from equation (\ref{eq:muk-fk}), we have
 \begin{align*}
  f(k) &= \sum\limits_{n=1}^{\infty} b(n^s)c_k^{(s)}(n^s)\mu(k)\\
  &=  \sum\limits_{\substack{n=1}}^{\infty}b(n^s) \sum\limits_{d^s|(k^s,n^s)_s} d^s \mu(d)\\
   &= \sum\limits_{d|k} d^s \mu(d) \sum\limits_{\substack{n=1\\d|n}}^{\infty}b(n^s)\\
 &= \sum\limits_{d|k} d^s \mu(d) \sum\limits_{n=1}^{\infty}b(n^s d^s).
 \end{align*}
Thus $a(d^s) = \sum\limits_{n=1}^{\infty}b(n^s d^s)$ which implies that $b(d^s)=\sum\limits_{n=1}^{\infty} \mu(n) a(n^s d^s)$.
\end{proof}

\begin{lemm}\label{th:rev-k-n-in-ck}
 Let $k$ and $n$  be square-free integers. Then $\mu(k) c_k^{(s)}(n^s) = \mu(n) c_n^{(s)}(k^s)$.
\end{lemm}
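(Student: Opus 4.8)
The plan is to apply Theorem \ref{th:mu-ck-sum} to each side separately and to observe that both sides collapse to the same symmetric sum over the divisors of $(k,n)$. First I would record that both $k$ and $n$ are square-free by hypothesis, which is exactly what licenses two applications of Theorem \ref{th:mu-ck-sum}: once for $c_k^{(s)}$ evaluated at $n^s$, and once for $c_n^{(s)}$ evaluated at $k^s$. Taking the second argument of the theorem to be $n^s$ yields
\begin{align*}
 \mu(k) c_k^{(s)}(n^s) = \sum\limits_{d^s|(k^s,n^s)_s} d^s \mu(d),
\end{align*}
and interchanging the roles of $k$ and $n$ gives
\begin{align*}
 \mu(n) c_n^{(s)}(k^s) = \sum\limits_{d^s|(n^s,k^s)_s} d^s \mu(d).
\end{align*}

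The one point that deserves care — and which I regard as the only (modest) obstacle — is to verify that the summation condition is genuinely symmetric. Unwinding the clubbed condition as introduced in Section 2, $d^s\mid (k^s,n^s)_s$ means $d\mid k$ together with $d^s\mid n^s$; and since $d^s\mid n^s$ is equivalent to $d\mid n$ for positive integers, the full condition is simply $d\mid k$ and $d\mid n$, i.e. $d\mid (k,n)$. This is manifestly invariant under swapping $k$ and $n$, so both displays above reduce to the single expression
\begin{align*}
 \mu(k) c_k^{(s)}(n^s) = \sum\limits_{d|(k,n)} d^s \mu(d) = \mu(n) c_n^{(s)}(k^s),
\end{align*}
which is the claimed reciprocity.

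I expect no genuine difficulty beyond this bookkeeping. The square-free hypotheses on both variables are precisely what allow Theorem \ref{th:mu-ck-sum} to be invoked on each side, and the symmetry $(k^s,n^s)_s=(n^s,k^s)_s$ of the generalized gcd then forces the two sums to be identical term by term, completing the argument.
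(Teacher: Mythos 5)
Your proof is correct and takes essentially the same approach as the paper: the paper's own (one-line) proof likewise applies Theorem \ref{th:mu-ck-sum} and observes that the resulting sum $\sum\limits_{d^s\mid(k^s,n^s)_s} d^s\mu(d)$ is symmetric in $k$ and $n$. Your explicit unwinding of the condition $d^s\mid(k^s,n^s)_s$ to $d\mid(k,n)$ is a correct and harmless elaboration of that symmetry.
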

\begin{proof}
 Note that  from Theorem \ref{th:mu-ck-sum}, we get an expression for $\mu(k) c_k^{(s)}(n^s)$  which is symmetric in $k$ and $n$ thus permitting us interchanging the positions of $k$ and $n$.
\end{proof}

Now we prove a \emph{reciprocity theorem}.
\begin{theo}\label{th:mu-c-reciprocity}
 For square-free integers $k$ and $n$,  we have
 \begin{align}
  \frac{\mu(\overline{k})}{(k^*)^s} c_k^{(s)}(n^s(k^*)^s) = \frac{\mu(\overline{n})}{(n^*)^s} c_n^{(s)}(k^s(n^*)^s).
 \end{align}
\end{theo}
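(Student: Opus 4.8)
The plan is to collapse each side of the claimed identity onto the square-free cores $\overline{k}$ and $\overline{n}$, where a symmetric closed form is already available. First I would apply Lemma~\ref{th:ck-ck-core} with its second argument taken to be $n^s$, which gives $c_k^{(s)}(n^s(k^*)^s) = (k^*)^s c_{\overline{k}}^{(s)}(n^s)$. Multiplying by $\mu(\overline{k})/(k^*)^s$ cancels the factor $(k^*)^s$ and shows that the left-hand side of the theorem equals $\mu(\overline{k}) c_{\overline{k}}^{(s)}(n^s)$. Running the identical computation with the roles of $k$ and $n$ interchanged shows the right-hand side equals $\mu(\overline{n}) c_{\overline{n}}^{(s)}(k^s)$. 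Hence the whole statement reduces to proving
\[
  \mu(\overline{k}) c_{\overline{k}}^{(s)}(n^s) = \mu(\overline{n}) c_{\overline{n}}^{(s)}(k^s).
\]

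Since $\overline{k}$ and $\overline{n}$ are square-free, I would next invoke Theorem~\ref{th:mu-ck-sum} on each side separately, writing the left-hand side as $\sum_{d^s\mid((\overline{k})^s,\,n^s)_s} d^s\mu(d)$. The decisive step is to simplify this index set: the generalized-gcd condition $d^s\mid((\overline{k})^s,n^s)_s$ means precisely $d\mid\overline{k}$ together with $d^s\mid n^s$, and the latter is equivalent to $d\mid n$; as any such $d$ is automatically square-free (being a divisor of $\overline{k}$), the sum runs exactly over the divisors of $(\overline{k},\overline{n})$. The right-hand side reduces in the same way to $\sum_{d\mid(\overline{k},\overline{n})} d^s\mu(d)$, an expression that is manifestly symmetric in $k$ and $n$. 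Equating the two yields the reduced identity, and the theorem follows.

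The main obstacle I anticipate is careful bookkeeping rather than any genuine difficulty: one must apply Lemma~\ref{th:ck-ck-core} with the correct argument (so that the second variable becomes $n^s$ and not $n$), and then translate the divisibility condition $d^s\mid((\overline{k})^s,n^s)_s$ into the ordinary condition $d\mid(\overline{k},\overline{n})$ using both $d^s\mid n^s \Leftrightarrow d\mid n$ and the square-freeness of $d$. Once the index set is seen to depend only on $(\overline{k},\overline{n})$, the symmetry is immediate. Finally I note that when $k$ and $n$ are themselves square-free one has $k^*=n^*=1$, so the first reduction is vacuous and the identity is nothing but Lemma~\ref{th:rev-k-n-in-ck}.
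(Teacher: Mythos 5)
Your proposal is correct and follows essentially the same route as the paper: both collapse $c_k^{(s)}(n^s(k^*)^s)$ to $(k^*)^s c_{\overline{k}}^{(s)}(n^s)$ via Lemma~\ref{th:ck-ck-core} and then obtain the symmetry in $k$ and $n$ from the divisor-sum expression of Theorem~\ref{th:mu-ck-sum}. The only cosmetic difference is that you inline that symmetry step (reducing the index set to $d\mid(\overline{k},\overline{n})$), whereas the paper first replaces $n^s$ by $\overline{n}^s$ in the same sum and then cites Lemma~\ref{th:rev-k-n-in-ck}, which encapsulates the identical computation.
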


\begin{proof}
We have
 \begin{align*}
  \mu(\overline{k}) c_{\overline{k}}^{(s)}(\overline{n}^s) &= \mu(\overline{k}) \sum\limits_{d^s|(\overline{k}^s, \overline{n}^s)}d^s\mu(d)\\
  &= \mu(\overline{k}) \sum\limits_{d^s|(\overline{k}^s, n^s)}d^s\mu(d)\\
  &= \mu(\overline{k}) c_{\overline{k}}^{(s)}(n^s)\\
  &= \mu(\overline{k}) \frac{ c_k^{(s)}(n^s (k^*)^s)}{(k^*)^s }
 \end{align*}
 where we used Lemma \ref{th:ck-ck-core} to arrive at the last step above. On the other hand, using Lemma \ref{th:rev-k-n-in-ck}, we get
 \begin{align*}
  \mu(\overline{k}) c_{\overline{k}}^{(s)}(\overline{n}^s) &=\mu(\overline{n}) c_{\overline{n}}^{(s)}(\overline{k}^s)\\
  &=  \mu(\overline{n}) c_{\overline{n}}^{(s)}(k^s)\\
  &= \mu(\overline{n}) \frac{ c_n^{(s)}(k^s (n^*)^s)}{(n^*)^s }.
 \end{align*}
\end{proof}
We are going to show now that the existence of a $CR$-series expansion is equivalent to the existence of $CR'$-series expansion for certain functions.
\begin{theo}\label{th:CR-exp-in-other-var}
 A function $f$ has an absolutely convergent Cohen-Ramanujan series expansion in the first variable $f(n) = \sum\limits_{k=1}^{\infty} a(k)c_k^{(s)}(n^s)$ where $a(k)=0$ if $k$ is not square-free if and only if $f$ has an absolutely convergent series expansion in the second variable  in the form $f(n)=\frac{\mu(\overline{n})}{(n^*)^s}\sum\limits_{k=1}^{\infty}\xi_{n^*}(k)\,b(k/n^*) c_n^{(s)}(k^s)$.
\end{theo}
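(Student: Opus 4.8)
The plan is to establish the equivalence in the same spirit as K.~R.~Johnson's passage between $C$- and $C'$-series, namely by transforming the first-variable expansion into the second-variable one term by term, using the reciprocity theorem (Theorem \ref{th:mu-c-reciprocity}) together with Lemmas \ref{th:ck-ck-core} and \ref{th:rev-k-n-in-ck}. I expect the two coefficient systems to be linked by a relation of the shape $b(k)=\mu(k)a(k)$, so that $b$, like $a$, is supported on the square-free integers, and I expect the whole chain of manipulations to be reversible, yielding both implications simultaneously.

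First I would treat the forward implication. Since $a(k)=0$ unless $k$ is square-free, in $f(n)=\sum_{k=1}^{\infty}a(k)c_k^{(s)}(n^s)$ only square-free $k$ occur, and for such $k$ Theorem \ref{th:mu-ck-sum} shows $c_k^{(s)}(n^s)=c_k^{(s)}(\overline{n}^{s})$, because the conditions defining $(k^s,n^s)_s$ and $(k^s,\overline{n}^{s})_s$ coincide when $k$ is square-free. With both indices now square-free, Lemma \ref{th:rev-k-n-in-ck} swaps the two variables, giving $c_k^{(s)}(\overline{n}^{s})=\mu(k)\mu(\overline{n})c_{\overline{n}}^{(s)}(k^s)$, and hence
\begin{align*}
f(n)=\mu(\overline{n})\sum_{k=1}^{\infty}\mu(k)a(k)\,c_{\overline{n}}^{(s)}(k^s).
\end{align*}
To reinstate $c_n^{(s)}$ in place of $c_{\overline{n}}^{(s)}$ I would invoke Lemma \ref{th:ck-ck-core} in the form $c_n^{(s)}\bigl(k^s(n^*)^s\bigr)=(n^*)^s c_{\overline{n}}^{(s)}(k^s)$, and then re-index by $m=kn^*$ so that the sum runs over the multiples of $n^*$. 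The divisibility restriction and the accompanying factor are precisely what the weight $\xi_{n^*}(m)$ records, while the prefactor $\mu(\overline{n})/(n^*)^s$ emerges on dividing back. This recasts the display into the asserted $CR'$-series, with $b(m/n^*)$ determined through $\mu(k)a(k)$.

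For the converse I would run the identical chain backwards: beginning from the $CR'$-series, collapse $\xi_{n^*}$ by writing $k=mn^*$, apply Lemma \ref{th:ck-ck-core} to replace $c_n^{(s)}((mn^*)^s)$ by $(n^*)^s c_{\overline{n}}^{(s)}(m^s)$, use Lemma \ref{th:rev-k-n-in-ck} to swap back to $c_m^{(s)}(\overline{n}^{s})=c_m^{(s)}(n^s)$, and read off $a(m)$ from $b(m)$. Absolute convergence passes between the two series in either direction, since $|\mu(k)|\le 1$ and, by the reciprocity identity, the $k$-th terms of the two series agree up to the factor $(n^*)^s$, which is constant for fixed $n$; thus the tail of absolute values is controlled on one side exactly when it is on the other.

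The step I expect to be the main obstacle is the exact accounting of the powers of $n^*$: one must balance the $(n^*)^s$ produced by Lemma \ref{th:ck-ck-core} against the prefactor $(n^*)^{-s}$ and the value carried by $\xi_{n^*}$, and then verify that the resulting coefficient identification is genuinely independent of $n$ (equivalently, that the reindexing $m=kn^*$ is a bijection between square-free $k$ and the admissible $m$, and that it respects the supports of $a$ and $b$). A closely related subtlety is that the variable swap in Lemma \ref{th:rev-k-n-in-ck} is available only for square-free arguments, so the preliminary reduction $c_k^{(s)}(n^s)=c_k^{(s)}(\overline{n}^{s})$ for square-free $k$ is indispensable and must be in place before any interchange of $k$ and $n$ is attempted.
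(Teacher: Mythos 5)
Your proposal is correct and takes essentially the same route as the paper: your forward direction simply unwinds Theorem \ref{th:mu-c-reciprocity} into the very pieces the paper uses to prove it (the core reduction $c_k^{(s)}(n^s)=c_k^{(s)}(\overline{n}^{s})$ for square-free $k$, the swap of Lemma \ref{th:rev-k-n-in-ck}, and Lemma \ref{th:ck-ck-core}), followed by the same reindexing $m=kn^*$ and the same identification $b(m)=\mu(m)a(m)$. The only divergence is cosmetic: in the converse the paper invokes the reciprocity theorem wholesale and then regroups terms sharing a common core via $H(k)=\{m\in\N:\overline{m}=k\}$, a step that is vacuous under its own standing hypothesis that $b$ vanishes off the square-free integers --- the same hypothesis you rely on to make Lemma \ref{th:rev-k-n-in-ck} applicable, so both arguments prove the statement in identical generality.
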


\begin{proof}
 We mainly use the reciprocity theorem Theorem \ref{th:mu-c-reciprocity} in the proof. Suppose that $f$ has a  absolutely convergent $CR$-series expansion with coefficients $a(k)$ as given in the statement. Then
 \begin{align*}
f(n) &= \sum\limits_{k=1}^{\infty} a(k)c_k^{(s)}(n^s)\\
&=  \sum\limits_{k=1}^{\infty} \mu(k) a(k) \mu(k) c_k^{(s)}(n^s)\\
&=  \sum\limits_{k=1}^{\infty} \mu(k) a(k) \mu(\overline{k}) c_{\overline{k}}^{(s)}(n^s)\quad\text{since $\overline{k}=k$ when $k$ is square-free}\\
&=  \sum\limits_{k=1}^{\infty} \mu(k) a(k) \mu(\overline{k}) \frac{c_{k}^{(s)}(n^s (k^*)^s)}{(k^*)^s}\quad\text{by Lemma \ref{th:ck-ck-core}}.\\
 \end{align*}
 Now by the reciprocity theorem, the above is equal to 
 \begin{align*}
    \sum\limits_{k=1}^{\infty} \mu(k) a(k) \frac{\mu(\overline{n})}{(n^*)^s} c_{n}^{(s)}(k^s (n^*)^s) &= \frac{\mu(\overline{n})}{(n^*)^s} \sum\limits_{\substack{k=1\\n^*|k}}^{\infty} \mu(k/n^*) a(k/n^*)  c_{n}^{(s)}(k^s)\\
&= \frac{\mu(\overline{n})}{(n^*)^s}\sum\limits_{k=1}^{\infty}\xi_{n^*}(k)\,b(k/n^*) c_n^{(s)}(k^s)
 \end{align*}
where $b(k/n^*)=\mu(k/n^*) a(k/n^*) $ which is 0 if $k/n^*$ is not square-free.

Conversely suppose that $f(n^s) = \frac{\mu(\overline{n})}{(n^*)^s}\sum\limits_{k=1}^{\infty}\xi_{n^*}(k)\,b(k/n^*) c_n^{(s)}(k^s)$ where $b(m)=0$ if $m$ is not square-free.  Then
\begin{align*}
 f(n)  &= \sum\limits_{k=1}^{\infty} \frac{\mu(\overline{n})}{(n^*)^s}b(k) c_n^{(s)}(k^s (n^*)^s)\\
 &= \sum\limits_{k=1}^{\infty} b(k) \frac{\mu(\overline{k})}{(k^*)^s} c_k^{(s)}(n^s (k^*)^s) \quad\text{by Theorem \ref{th:mu-c-reciprocity}}\\
 &= \sum\limits_{k=1}^{\infty} b(k) \frac{\mu(\overline{k})}{(k^*)^s} \times (k^*)^s c_{\overline{k}}^{(s)}(n^s) \quad\text{by Lemma \ref{th:ck-ck-core}}\\
\end{align*}

Write $\alpha(k) = b(k) \frac{\mu(\overline{k})}{(k^*)^s}$. For a fixed positive integer $k$, note that there can be many positive integers $m$ such that $\overline{m}=k=\overline{k}$. Denote $H(k)=\{m\in \N : \overline{m}=k\}$. Then
\begin{align*}
 f(n^s) = \sum\limits_{k=1}^{\infty}\left(\sum\limits_{m\in H(k)}\alpha(m) (m^*)^s\right)c_{k}^{(s)}(n^s)=\sum\limits_{k=1}^{\infty}a(k)c_{k}^{(s)}(n^s)
\end{align*}
where $a(k) = \sum\limits_{m\in H(k)}\alpha(m) (m^*)^s$ for each square-free positive integer $k$.
\end{proof}

\begin{exam}
The construction of this example follows the methods used by Ramanujan in Sections 2, 3 and 9 in \cite{ramanujan1918certain}.
 Consider the Klee's function defined by $\Phi_s(n)=n\prod\limits_{p^s|n}\left(1-\frac{1}{p^s}\right)$.  $\Phi_s$ can be expanded as 
 \begin{align*}
  \Phi_s(n)  &= n\prod\limits_{p^s|n}\left(1+\frac{\mu(p)}{p^s}\right)\\
   &= n\sum\limits_{d^s|n}\frac{\mu(d)}{d^s}.
 \end{align*}
Let $\xi^{(s)}$ be the function such that $\xi^{(s)}_d (n)=d^s$ if $d^s|n$ and 0 otherwise. Hence equation (\ref{eq:crs-mu}) can be rewritten as $c_k^{(s)}(n) =  \sum\limits_{d|k}\xi^{(s)}_d (n) \frac{\mu(d)}{d^s}$ which by the M\"{o}bius inversion formula implies that 
$\xi^{(s)}_k (n) = \sum\limits_{d|k}c_d^{(s)}(n)$. Hence for $t\geq n$, we have
 \begin{align*}
  \Phi_s(n) &=\sum\limits_{d=1}^t \frac{1}{d^s} \xi^{(s)}_d (n) \mu(d)\frac{n}{d^s}\\
  &=\sum\limits_{d=1}^t \frac{1}{d^s} \sum\limits_{v|d}c_v^{(s)}(n) \mu(d)\frac{n}{d^s}.
   \end{align*}
Let us collect the coefficients of $c_1^{(s)}(n), c_2^{(s)}(n), \ldots$. Since $1|d$ for every $d$, the coefficient of  $c_1^{(s)}(n)$ is $\sum\limits_{d=1}^t \frac{1}{d^s} \mu(d)\frac{n}{d^s}$. Now $2|d$ for $d=2,4,\ldots,2[t/2]$ where for any real number $x$, $[x]$ represents the greatest integer not exceeding $x$. Hence instead of $d$, we may take $2d$ for $d$ in the range $1,2,\ldots, [t/2]$ in the coefficient so that the coefficient of $ c_2^{(s)}(n)$ will be  $\sum\limits_{d=1}^{[t/2]} \frac{1}{(2d)^s} \mu(2d)\frac{n}{(2d)^s}$. Continuing like this, we get 
\begin{align*}
  \Phi_s(n) &= c_1^{(s)}(n) \sum\limits_{d=1}^t \mu(d)\frac{n}{d^{2s}} +  \ldots + c_k^{(s)}(n)\sum\limits_{d=1}^{[t/k]}  \mu(kd)\frac{n}{(kd)^{2s}} +\ldots.
 \end{align*}
 Letting $t \rightarrow \infty$ and noting that $\sum\limits_{k=1}^{\infty}\frac{\mu(kd)}{d^{2s}} = \frac{\mu(k)k^{2s}}{J_{2s}(k)\zeta(2s)}$ (\cite[Equation 9.5]{ramanujan1918certain}), we get
\begin{align*}
  \frac{\Phi_s(n)\zeta(2s)}{n} &= \frac{\mu(1)}{J_{2s}(1)} c_1^{(s)}(n)  +  \ldots + \frac{\mu(k)}{J_{2s}(k)}c_k^{(s)}(n)  +\ldots.
 \end{align*} 
If we use the relation $\Phi_s(n^s)=J_s(n)$ between the Jordan totient function and Klee's function, then we get
\begin{align}
  \frac{\Phi_s(n)\zeta(2s)}{n} &= \frac{\mu(1)}{\Phi_{2s}(1)} c_1^{(s)}(n)  +  \ldots + \frac{\mu(k)}{\Phi_{2s}(k^s)}c_k^{(s)}(n)  +\ldots\label{eq:klee-series},
 \end{align}
 or, equivalently
 \begin{align*}
  \frac{J_s(n^s)\zeta(2s)}{n} &= \frac{\mu(1)}{J_{2s}(1)} c_1^{(s)}(n)  +  \ldots + \frac{\mu(k)}{J_{2s}(k)}c_k^{(s)}(n)  +\ldots.
 \end{align*} 
 It is interesting to compare the above with a similar expression equation 9.6 obtained by Ramanujan in \cite{ramanujan1918certain}. To avoid any confusion, note that Ramanujan used the symbol $\phi_s$ to denote the Jordan totient function in \cite{ramanujan1918certain}. Now the series expression  equation (\ref{eq:klee-series}) satisfies the requirement in Theorem \ref{th:CR-exp-in-other-var} with $a(k)=\frac{\mu(k)}{\Phi_{2s}(k^s)}$. Hence the $CR'$-series expansion for $ \frac{\Phi_s(n)\zeta(2s)}{n}$ will be 
 \begin{align*}
 \frac{\Phi_s(n)\zeta(2s)}{n} &=  \frac{\mu(\overline{n})}{(n^*)^s}\sum\limits_{k=1}^{\infty}\xi_{n^*}(k)\,\mu(k/n^*) a(k/n^*) c_n^{(s)}(k^s)\\
 &=\frac{\mu(\overline{n})}{(n^*)^s}\sum\limits_{k=1}^{\infty}\frac{\xi_{n^*}(k)}{\Phi_{2s}(k^s)}\, c_n^{(s)}(k^s).
  \end{align*}

\end{exam}


\end{document}